\documentclass[12pt]{article}
\usepackage[utf8]{inputenc}
\usepackage{amssymb, amsmath,amsthm, thmtools, mathabx, mathtools}
\usepackage{bbm}
\usepackage{amsfonts}
\usepackage[left=3cm,right=3cm,top=2.5cm,bottom=2.5cm]{geometry}
\usepackage{xcolor}

\usepackage{tabto}
\TabPositions{2 cm, 4 cm, 6 cm, 8 cm}

\usepackage{tikz-cd}

\usepackage{tikz}
\usetikzlibrary{arrows, automata, positioning}

\usepackage{tocloft}
\usepackage{appendix}

\usepackage{enumerate}

\usepackage{hyperref}
\usepackage[capitalize]{cleveref}

\hypersetup{
  colorlinks   = true, 
  urlcolor     = blue, 
  linkcolor    = blue, 
  citecolor   = blue 
}
 
 \newtheorem{thmA}{Theorem}

\newtheorem{theorem}{Theorem}[section]

\newtheorem{lemma}[theorem]{Lemma}

\newtheorem*{theorem*}{Theorem}

\theoremstyle{definition}
\newtheorem{definition}[theorem]{Definition}

\newtheorem{remark}[theorem]{Remark}

\newtheorem*{remark*}{Remark}
\newtheorem*{notation*}{Notation}
\newtheorem*{acks*}{Acknowledgements}
\newtheorem*{out*}{Outline}

\theoremstyle{plain}

\renewcommand\leq{\leqslant}

\newcommand{\Z}{\mathbb{Z}}

\begin{document}

\title{An improved characterisation of \\
inner automorphisms of groups}
\author{Francesco Fournier-Facio}
\date{\today}
\maketitle

\begin{abstract}
We show that every group $G$ embeds malnormally into a simple, complete co-Hopfian group $H$. This implies that a non-trivial endomorphism of $G$ extends to $H$ if and only if it is an inner automorphism, strengthening a theorem of Schupp and answering a question of Bergman.
\end{abstract}

\section{Introduction}

Let $\alpha \colon G \to G$ be a group endomorphism, and let $\varphi \colon G \to H$ be a group homomorphism. We say that $\alpha$ \emph{extends along $\varphi$} to an endomorphism $\beta \colon H \to H$ if $\varphi \alpha = \beta \varphi$. In \cite{schupp}, answering a question of Macintyre, Schupp proved that an automorphism of $G$ is inner if and only if it extends along every homomorphism $\varphi \colon G \to H$ to an automorphism of $H$. Analogues of this result exist in other categories, e.g.\ finite groups \cite{pettet}, presheaves of groups \cite{presheaves}, groupoids \cite{groupoid} and compact groups \cite{compact}.

In \cite{bergman}, Bergman relaxed the assumption that both $\alpha$ and $\beta$ be automorphisms. He proved that inner automorphisms are exactly those non-trivial endomorphisms that extend \emph{functorially} along every homomorphism.

In this note we improve upon both results, by showing that the extension property for endomorphisms characterises non-trivial inner automorphisms, with no need for the functoriality assumed by Bergman. The non-trivial implication is \eqref{main:new} $\Rightarrow$ \eqref{main:inner}, which answers \cite[Question 5]{bergman} and \cite[Problem 21.14]{kourovka}.

\begin{thmA}
\label{main}

For every group $G$ there exists an embedding $\iota \colon G \to H$ such that, for a non-trivial endomorphism $\alpha \colon G \to G$, the following are equivalent.
\begin{enumerate}[(i)]
\item\label{main:inner} $\alpha$ is an inner automorphism.
\item\label{main:bergman} $\alpha$ extends functorially along every homomorphism $\varphi \colon G \to K$.
\item\label{main:new} $\alpha$ extends along $\iota \colon G \to H$.
\end{enumerate}
Moreover, if $G$ is countable, then $H$ can be chosen to be finitely generated.
\end{thmA}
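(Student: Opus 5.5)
The plan follows the abstract: first embed $G$ malnormally into a group $H$ that is simple, complete (trivial centre and every automorphism inner) and co-Hopfian (every injective endomorphism is surjective). Then read off the equivalences from these properties.

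\emph{Construction of $H$.} I would build $H$ by an iterated small cancellation or HNN-type argument. Start with $G_0 = G * F$ for a suitable free group $F$. Then repeatedly adjoin relations and stable letters, in the style of Schupp's original construction and of Miller--Schupp / Ol'shanskii-type embeddings, with three aims:
\begin{itemize}
\item[(a)] every pair of non-trivial elements becomes conjugate-generating, which gives simplicity;
\item[(b)] every automorphism that appears becomes inner, which gives completeness;
\item[(c)] no proper self-embedding survives, which gives co-Hopficity.
\end{itemize}
Small cancellation over free products preserves malnormality of the factor $G$, since each step gives a relatively hyperbolic structure in which $G$ is a malnormal peripheral subgroup. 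The direct limit then keeps $G$ malnormal. When $G$ is countable, the standard Ol'shanskii--Osin machinery lets one embed the countable limit into a $2$-generated group while keeping these properties, which gives finite generation. I expect this construction, especially forcing co-Hopficity and completeness simultaneously in the limit, to be the main obstacle. My first attempt would be to take a simple group with strong rigidity, for instance via Osin's ``every non-trivial element conjugate'' groups combined with Belegradek--Osin rips-type constructions. There, $\mathrm{Out}$ is controlled and injective endomorphisms are forced onto by a hyperbolicity argument on the peripheral structure.

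\emph{Deducing the theorem.} Take $\iota\colon G\to H$ as above.

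For \eqref{main:inner}$\Rightarrow$\eqref{main:bergman}: if $\alpha = c_g$, extend along $\varphi\colon G\to K$ by $c_{\varphi(g)}$. This is clearly functorial.

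For \eqref{main:bergman}$\Rightarrow$\eqref{main:new}: this is immediate.

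For \eqref{main:new}$\Rightarrow$\eqref{main:inner}: suppose $\beta\iota = \iota\alpha$ with $\alpha \neq 1$. Then $\beta$ is non-trivial. Since $H$ is simple, $\ker\beta = 1$, so $\beta$ is injective. Since $H$ is co-Hopfian, $\beta$ is an automorphism, and since $H$ is complete, $\beta = c_h$ for some $h\in H$. Hence $hGh^{-1} = \beta(G) = \alpha(G) \le G$.

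If $\alpha(G)$ is non-trivial, then $hGh^{-1}\cap G \neq 1$, and malnormality gives $h \in G$. Thus $\alpha = c_h|_G$ is an inner automorphism of $G$. The remaining case is $\alpha(G) = 1$, which contradicts the non-triviality of $\alpha$ as an endomorphism. So $\alpha$ is inner, completing the proof modulo the construction.
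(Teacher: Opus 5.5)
Your derivation of the cycle (i)$\Rightarrow$(ii)$\Rightarrow$(iii)$\Rightarrow$(i) from the properties of $H$ is correct and is exactly the paper's argument; your separate handling of $\alpha(G)=1$ is, if anything, slightly more careful. The genuine gap is that the existence of $H$ is not proved. That is Theorem~\ref{embedding}, and it is the whole content of the paper. Your construction paragraph lists goals (a)--(c) but gives no mechanism that achieves co-Hopficity, and it never confronts the basic obstruction. If $G\subsetneq H$ and $H$ embeds into $G$, then $H\hookrightarrow G\subsetneq H$ is an injective non-surjective endomorphism. So any construction must guarantee that $H$ does not embed into $G$, for arbitrary $G$, and a ``hyperbolicity argument on the peripheral structure'' does not do this.

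The paper's key idea is to embed $G_1=G$ together with an auxiliary group $G_2$ such that neither of $G_1,G_2$ embeds in the other. It uses Obraztsov's theorem (Theorem~\ref{cobr}), whose crucial extra output is a classification of the proper subgroups of $H$: infinite cyclic, infinite dihedral, or conjugate into some $G_i$. A proper image $\beta(H)\cong H$ would then be conjugate into some $G_i$, which is impossible since $H\supseteq G_1\cup G_2$. Finite generation also comes directly from that theorem. Your plan to embed the limit into a $2$-generated group afterwards does not work, because the new overgroup need not be simple, complete or co-Hopfian, nor contain $G$ malnormally.

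For uncountable $G$ there is a further obstruction your sketch does not see: in general $H$ must be strictly larger than $G$. By Remark~\ref{necessary}, for some uncountable $\kappa$ (and for all, under GCH) there is a group of cardinality $\kappa$ containing a copy of every group of cardinality $\kappa$. Such a group embeds into no co-Hopfian group of cardinality $\kappa$. Small cancellation quotients and HNN extensions of $G*F$ and their direct limits keep $|H|=|G|$ unless you deliberately enlarge. Enlarging alone still does not prevent $H$ from embedding properly into itself.

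The paper handles this by taking $G_2$ to be a J\'onsson group of cardinality $\kappa>|G|$ (Corson, Theorem~\ref{corson}), chosen so that $G$ does not embed in it. It then applies Obraztsov's Theorem~\ref{obr} with a generating function built from a magma structure on $G_1\cup G_2$ in which $G_2$ is a maximal submagma (Lemma~\ref{algebras}). Now $\beta(H)\cong H$ is neither cyclic, dihedral, nor conjugate into a $G_i$. So up to conjugacy it is $\langle A\rangle\supseteq f(A)$ with $|A\cap G_2|=\kappa$. The J\'onsson property gives $\langle A\cap G_2\rangle=G_2$, and maximality of $G_2$ then gives $f(A)=\Omega$, so $\beta(H)=H$.
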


Schupp proved his theorem by constructing, for every group $G$, an embedding $G \to H$ such that $H$ is \emph{complete} (i.e.\ it has trivial centre and outer automorphism group) and $G$ is \emph{malnormal} in $H$ (i.e.\ $hGh^{-1} \cap G = \{ 1 \}$ for all $h \in H \setminus G$). We need to also rule out maps $H \to H$ that are either not injective or not surjective, and this is done in the following theorem. Recall that a group is \emph{co-Hopfian} if every injective endomorphism is an automorphism.

\begin{thmA}
\label{embedding}

Every group $G$ embeds into a group $H$ with the following properties.
\begin{enumerate}[(1)]
\item\label{embedding:simple} $H$ is simple, complete and co-Hopfian.
\item\label{embedding:malnormal} $G$ is malnormal in $H$.
\item\label{embedding:fg} If $G$ is countable, then $H$ is finitely generated.
\end{enumerate}
\end{thmA}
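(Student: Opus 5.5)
We plan to build $H$ in two stages, using small cancellation over relatively hyperbolic groups (Osin's theory, or the Dehn filling / group-theoretic small cancellation results in the style of Olshanskii and Osin). Without loss of generality we first replace $G$ by $G \times \Z$ or by $G * \Z$ to avoid degenerate cases; malnormality of $G$ in this enlarged group is clear for the free product. Set $G_0 = G * F$ with $F$ free of rank $2$, which is hyperbolic relative to $\{G\}$. Then $G$ is malnormal in $G_0$, because peripheral subgroups of a free product are malnormal.

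\textbf{Step 1 (simplicity, finite generation, malnormality).} We follow the standard ``simple embedding via small cancellation'' scheme, as in Osin's proof that every countable group embeds in a $2$-generated simple group, adapted to arbitrary $G$ by working relative to $G$. Concretely, we enumerate the pairs $(g, h)$ of nontrivial elements of the current group. For each pair we add a relation $h = w(g, x, y)$, where $x, y$ generate $F$ and $w$ is a word chosen to satisfy a $C'(\lambda)$ small cancellation condition relative to $G$. Osin's theorem on small cancellation quotients of relatively hyperbolic groups guarantees three things at each finite stage: $G$ still embeds, the quotient is hyperbolic relative to $G$, and so $G$ stays malnormal. We pass to the direct limit, transfinitely if $G$ is uncountable. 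Malnormality passes to the limit because a witness $h \notin G$ with $hGh^{-1} \cap G \neq 1$ would already appear at a finite stage. In the limit every nontrivial $g$ normally generates everything, so the limit is simple. If $G$ is countable, we also arrange, using Higman--Neumann--Neumann-style relations $x^{-n}yx^{n} \cdot (\ldots) = g_n$, that $G$ lies in $\langle x, y \rangle$. This makes $H$ finitely generated.

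\textbf{Step 2 (completeness and co-Hopfianity).} This is the main obstacle, since simplicity alone controls neither injective non-surjective endomorphisms nor outer automorphisms. Our plan is to interleave a second family of relations that rigidifies $H$. We fix elements $x, y$ and impose relations making the pair $(x, y)$ ``marked'': for instance, we make $x$ and $y$ have orders $p$ and $q$, two distinct primes not appearing as orders of torsion elements of $G$. We then use the relative hyperbolic structure to show that every element of order $p$ is conjugate to a power of $x$. Any endomorphism $\beta$ is nontrivial, hence injective, by simplicity. So $\beta$ sends $x$ to a conjugate of $x^{k}$, and after composing with an inner automorphism we may assume $\beta(x) = x^{k}$. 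Next we arrange, by adding at every stage relations that identify each element of the form $x^{k}$, or each image of the generating pair, with a word realising the original generators, that the image of $\beta$ contains a generating set. This gives surjectivity, hence co-Hopfian. Combining this with rigidity of the marked pair yields $\beta = $ inner, giving completeness; trivial centre is automatic from simplicity.

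The delicate point is to make these ``rigidity relations'' compatible with small cancellation while only countably many constraints exist per stage. We handle this by diagonal enumeration: we list all candidate images of the finite generating data at each stage and kill them off one by one. We expect the verification that every finite-order element is conjugate into $\langle x\rangle$, $\langle y\rangle$ or $G$ (the torsion theorem for relative small cancellation quotients) to be the technical crux. For uncountable $G$ we instead take the generating set to be $G \cup \{x, y\}$ and argue identically with transfinite enumeration, so that the marked elements still pin down the endomorphism.
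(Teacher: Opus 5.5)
Your treatment of the uncountable case cannot work, and this is the decisive gap. If $H$ is generated by $G \cup \{x,y\}$ (or is any quotient of $G * F$), then $|H| = |G|$. But for certain uncountable cardinals $\lambda$ (and for every uncountable $\lambda$ under GCH) there is a group $G$ of cardinality $\lambda$ that contains a copy of every group of cardinality $\lambda$. If such a $G$ sat inside some $H$ with $|H| = \lambda$, the composite of embeddings $H \hookrightarrow H \times \Z \hookrightarrow G \hookrightarrow H$ would be an injective, non-surjective endomorphism of $H$. So no scheme that preserves the cardinality of $G$ can produce a co-Hopfian $H$ in general.

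Independently, your transfinite iteration breaks at limit ordinals. A direct limit of relatively hyperbolic quotients is no longer hyperbolic relative to $G$, so Osin's theorem cannot be reapplied. What is missing is a way to make $|H| > |G|$ while forcing every subgroup of full cardinality to be all of $H$.

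The paper supplies exactly this. It pairs $G_1 = G$ with a J\'onsson group $G_2$ of cardinality $\kappa > |G|$ (Corson), chosen so that neither group embeds in the other. It then feeds Obraztsov's Theorem A a generating function built from a magma structure on $G_1 \sqcup G_2$ in which $G_2$ is a maximal submagma. If $\beta(H)$ were proper, it would be conjugate to some $\langle A \rangle \supset f(A)$ with $|A \cap G_2| = \kappa$. The J\'onsson property then gives $G_2 \setminus \{1\} \subset f(A)$, and maximality gives $f(A) = \Omega$, so $\beta(H) = H$.

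Even in the countable case, your Step 2 is a programme rather than a proof. Primes $p, q$ that do not occur as element orders in $G$ need not exist (take $G = \mathbb{Q}/\Z$). Without them, your claim that every element of order $p$ is conjugate into $\langle x \rangle$ fails. Surjectivity requires $\langle x, zyz^{-1} \rangle = H$ for \emph{every} $z \in H$, because $\beta(y)$ is known only up to an arbitrary conjugator. Completeness requires that every non-identity choice of $(k,l,z)$ fails to extend to an endomorphism. These are infinitely many inequalities that must survive all later quotients, and none of this is verified.

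The paper sidesteps all of it. Obraztsov's Theorem C already provides simplicity, completeness, malnormality and $2$-generation. Its subgroup classification says every proper subgroup is infinite cyclic, infinite dihedral, or conjugate into some $G_i$. This gives co-Hopfianity at once, provided $G_2$ is chosen without involutions and mutually non-embeddable with $G$.
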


\begin{proof}[Proof of \cref{main} assuming \cref{embedding}]
The implications \eqref{main:inner} $\Rightarrow$ \eqref{main:bergman} $\Rightarrow$ \eqref{main:new} are clear (and the implication \eqref{main:bergman} $\Rightarrow$ \eqref{main:inner} is \cite[Corollary 3]{bergman}), so it remains to show that \eqref{main:new} $\Rightarrow$ \eqref{main:inner}. Let $\alpha \colon G \to G$ be a non-trivial endomorphism, let $\iota \colon G \to H$ be the embedding from \cref{embedding} and suppose that $\alpha$ extends along $\iota$ to an endomorphism $\beta \colon H \to H$. Because $\alpha$ is not trivial, $\beta$ is not trivial either. Since $H$ is simple, $\beta$ is injective; since $H$ is co-Hopfian, $\beta$ is an automorphism; since $H$ is complete, $\beta$ is inner, corresponding to an element $h \in H$. Because $\alpha(G) \subset G$, it follows that $hGh^{-1} \subset G$, so $h \in G$ by \cref{embedding}\eqref{embedding:malnormal} and hence $\alpha$ is inner.
\end{proof}

The rest of this note is devoted to the proof of \cref{embedding}. Ensuring that $H$ is co-Hopfian is the main new difficulty. When $G$ is countable and avoids some order of torsion, Miller and Schupp embed $G$ into a (possibly not simple) complete group that is Hopfian and co-Hopfian \cite{miller:schupp} (see also \cite{bridson:short}). Minasyan observed \cite{overflow} that it follows from an embedding theorem of Olshanskii \cite{olshanskii} that every countable group embeds into a finitely generated simple co-Hopfian group; his idea applies almost directly to prove \cref{embedding} in the countable case, using a stronger embedding theorem of Obraztsov \cite{obraztsov}.

However when $G$ is uncountable, \cref{embedding} is false if one insists that $G$ and $H$ have the same cardinality (\cref{necessary}), as they do in all the embeddings mentioned so far. To work around this, we use Corson's construction of J{\'o}nsson groups of large cardinality \cite{corson} as an additional ingredient.

\begin{acks*}
The author is supported by the Herchel Smith Postdoctoral Fellowship Fund. He thanks George Bergman, Sam Corson and Ashot Minasyan for useful conversations.
\end{acks*}

\section{The countable case}

We start by assuming that $G$ is countable. In this case we can directly apply an embedding theorem of Obraztsov \cite{obraztsov} (which is a refinement of several previous versions \cite{obraztsov0, obraztsov1, obraztsov2, olshanskii}).

\begin{theorem}[Obraztsov {\cite[Theorem C]{obraztsov}}]
\label{cobr}
Let $G_1, G_2$ be two non-trivial countable groups, not both of order $2$. Then $G_1, G_2$ embed into a group $H$ with the following properties:
\begin{enumerate}[(1)]
\item\label{cobr:2gen} If $g \in G_i, h \notin G_i$ and $g^2 \neq 1$, then $\langle g, h \rangle = H$;
\item\label{cobr:simple} $H$ is simple and complete;
\item\label{cobr:malnormal} $G_i$ is malnormal in $H$;
\item\label{cobr:subgroups} Every proper subgroup of $H$ is either infinite cyclic, or infinite dihedral, or conjugate to a subgroup of a $G_i$.
\end{enumerate}
\end{theorem}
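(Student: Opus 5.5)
Since this is Obraztsov's theorem, the plan is to recover it by the graded small cancellation method over free products of Olshanskii and Obraztsov; the paper itself only cites it. Start from the free product $F = G_1 * G_2$, which is countable. Fix a countable enumeration of all the ``tasks'' to be enforced. These are all pairs $(g,h)$ with $g$ in some $G_i$, $g^2 \neq 1$ and $h \notin G_i$ (up to the relevant conjugacy), together with all elements $x \in F$. Build $H$ as a direct limit $F = H_0 \to H_1 \to H_2 \to \cdots$. At stage $n$ we impose relations of the form $x = w_n(g,h)$ for every generator $x$ of $H_0$ and the current pair $(g,h)$. Each $w_n$ is a long word alternating in high powers of $g$ and of conjugates of $h$, chosen to satisfy a graded small cancellation condition $C'(\lambda)$ relative to the free product structure. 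In the limit every such pair $(g,h)$ generates $H$, which is property \eqref{cobr:2gen}.

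The heart of the argument is the standard package of diagram lemmas for graded (Olshanskii-type) small cancellation over free products. I would establish, by induction on the rank, the following facts about reduced diagrams over $H_n$:
\begin{enumerate}[(a)]
\item each contains a cell with a long contiguity arc to its boundary (a Greendlinger-type lemma);
\item consequently the natural maps $G_i \to H$ are injective;
\item every element of $H$ is either conjugate into some $G_i$ or has an infinite cyclic centraliser-type ``periodic'' behaviour.
\end{enumerate}
Malnormality \eqref{cobr:malnormal} then follows from an annular-diagram (conjugacy) analysis: if $hG_ih^{-1} \cap G_i \neq 1$, the resulting annular diagram has no cells, by the contiguity estimates, so $h \in G_i$. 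This step is where I expect the main obstacle. The delicate point is controlling contiguity between relator cells and the free-product factors, especially the exceptional behaviour of involutions. This is why the hypotheses exclude $g^2 = 1$ and both $G_i$ of order $2$, since then infinite dihedral pieces cannot be broken.

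The remaining properties follow from this. For \eqref{cobr:subgroups}, take a proper subgroup $K$. If $K$ contains an element conjugate into some $G_i$ with $g^2 \neq 1$ and an element outside that conjugate, then \eqref{cobr:2gen} forces $K = H$. Otherwise all elements of $K$ lie in one conjugate of a $G_i$, or $K$ consists of ``hyperbolic'' elements and involutions. In the latter case the diagram lemmas show that $K$ is contained in the elementary closure of a single element, so it is infinite cyclic or infinite dihedral. Simplicity follows from \eqref{cobr:2gen}. Given a non-trivial normal subgroup $N$, pick $n \in N$ and $g \in G_1$ with $g^2 \neq 1$. Then $g$ and $ngn^{-1}$ (or $gn$) are non-commuting and not in a common conjugate of a factor, so they generate $H$; the quotient $H/N$ is then generated by commuting images, and iterating gives $H/N = 1$. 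Trivial centre is immediate from simplicity and non-abelianness. For trivial outer automorphisms, an automorphism permutes the maximal non-elementary subgroups, so by \eqref{cobr:subgroups} and malnormality it sends each $G_i$ to a conjugate of some $G_j$. After composing with an inner automorphism it preserves the $G_i$ setwise. The rigidity of the chosen relators $w_n$ (choosing exponents distinct across stages so that relators cannot be permuted) then forces it to be the identity.
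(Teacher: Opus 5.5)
The paper does not reprove this theorem. It cites Obraztsov's Theorem C and only remarks that malnormality, stated there as self-normalisation, is what Olshanskii's Lemma 34.10 actually proves; your annular-diagram step for malnormality is in that spirit.

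The genuine gap in your reconstruction is the choice of relations. As described, your relators are only of the form $x = w_n(g,h)$ for pairs with $g \in G_i$, $g^2 \neq 1$, $h \notin G_i$. So the construction enforces (1) and nothing more, and you then try to deduce (4) from (1) plus the diagram lemmas. That step fails. Property (1) says nothing about a subgroup containing no element of order $>2$ conjugate into a factor. Property (4), by contrast, demands that such a subgroup be all of $H$ unless it is infinite cyclic, infinite dihedral, or conjugate into a factor.

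Concretely, since $G_1, G_2$ are not both of order $2$, the Cartesian subgroup $\ker(G_1 * G_2 \to G_1 \times G_2)$ is free of rank $(|G_1|-1)(|G_2|-1) \geq 2$. It meets every conjugate of each factor trivially, so it contains hyperbolic $u,v$ with $\langle u,v\rangle$ free of rank $2$ and containing no non-trivial elliptic elements. None of your relators is aimed at such a pair. The contiguity estimates give no mechanism forcing $\langle u,v\rangle$ into an elementary subgroup; if anything, they are what one uses to show such subgroups survive.

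The extreme case makes this visible. For $G_1 = (\Z/2)^2$ and $G_2 = \Z/2$ there is no pair with $g^2 \neq 1$ at all, so your $H$ is just $G_1 * G_2$. That group is not simple and violates (4). Your simplicity argument, which starts by picking $g \in G_1$ with $g^2 \neq 1$, also breaks there.

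The missing idea is Olshanskii's scheme, which Obraztsov generalises via generating functions. Enumerate \emph{all} pairs $(u,v)$ of elements of $F$. When a pair is treated at stage $n$, impose $x = w_n(u,v)$ for the generators $x$ whenever $\langle u,v\rangle$ in $H_{n-1}$ is neither cyclic, nor dihedral, nor conjugate into a factor. Then (4) is built in, and (1) becomes a consequence. Indeed, using malnormality of the factors (and trivial intersection of conjugates of distinct factors) at each stage, a pair $g \in G_i$ with $g^2 \neq 1$ and $h \notin G_i$ never falls into the excluded cases, since in a cyclic or dihedral group $hgh^{-1} = g^{\pm 1}$.

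Once (4) and malnormality hold, simplicity needs no ``iterating''. As written, that argument only shows $H/N$ is cyclic, and would need perfectness arranged through exponent sums in $w_n$. Instead, argue as follows for a proper normal $N \neq 1$.
\begin{enumerate}[(i)]
\item $N$ cannot lie in a conjugate of a factor, by malnormality.
\item If $N$ were cyclic or dihedral, its characteristic infinite cyclic subgroup $\langle c\rangle$ would be normal in $H$, so $C_H(c)$ has index at most $2$. Malnormality forces $G_i \cap C_H(c) = 1$, giving $|G_1|, |G_2| \leq 2$, a contradiction.
\end{enumerate}

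Your completeness paragraph remains only a heuristic. An automorphism of the direct limit need not be induced at any finite stage, so ``rigidity of the relators'' does not apply as stated. Also, a cyclic $G_i$ is not a maximal non-elementary subgroup, so your identification of the images of the $G_i$ breaks down in that case.
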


\begin{proof}[About the proof]
The only claim that is not explicitly stated in \cite{obraztsov} is the malnormality: rather, it is only stated that the normaliser of $G_i$ in $H$ is contained in $G_i$ \cite[Theorem A(11)]{obraztsov}. However, the author establishes this fact in \cite[end of p.235]{obraztsov} simply referring to \cite[Lemma 34.10]{olshanskii:book}, which in fact shows malnormality. Note moreover that malnormality is explicitly stated in \cite[Theorem 2(4)]{olshanskii}, that \cite{obraztsov} generalises.
\end{proof}

\begin{proof}[Proof of \cref{embedding} in case $G$ is countable]
Let $G_1 \coloneqq G$. We first claim that there exists a non-trivial countable group $G_2$ without involutions such that neither $G_1$ nor $G_2$ embeds in the other. If $G_1$ is not torsion-free, then we can take $G_2$ to be a finitely generated torsion-free group that does not embed into $G_1$ (this exists since a countable group has only countably many finitely generated subgroups, and there exist uncountably many isomorphism classes of finitely generated groups \cite{neumann}). If $G_1$ is torsion-free, then we can take $G_2$ to be $\Z/3$.

Let $H$ be the group given by \cref{cobr} for this choice of $G_1$ and $G_2$. By \cref{cobr}\eqref{cobr:2gen}, it is generated by a non-trivial element in $G_2$ and an element in $G_1$, giving \ref{embedding}\eqref{embedding:fg}. \cref{cobr}\eqref{cobr:malnormal} gives \ref{embedding}\eqref{embedding:malnormal}. To complete the proof of \ref{embedding}\eqref{embedding:simple}, we need to show that $H$ is co-Hopfian, so let $\beta \colon H \to H$ be an injective endomorphism of $H$. If $\beta(H)$ were a proper subgroup of $H$, since it is not infinite cyclic or infinite dihedral, it would have to be conjugate to a subgroup of a $G_i$ by \cref{cobr}\eqref{cobr:subgroups}. But $\beta(H) \cong H$, and so this contradicts the assumption that $G_1$ and $G_2$ do not embed into each other. Therefore $\beta$ is an automorphism, which concludes the proof.
\end{proof}

\begin{remark}
\label{fp}

While $H$ can be chosen to be finitely generated, it may not be possible to choose it to be finitely presented, even if $G$ is. Indeed, there exists a finitely presented group $G$ that contains a copy of every finitely presented group \cite{higman}. Let $G \to H$ be an embedding, where $H$ is finitely presented. Then there exist embeddings $H \to H \times \Z \to G \to H$, whose composition is an endomorphism of $H$ that is injective but not surjective, so $H$ is not co-Hopfian.
\end{remark}

\section{The uncountable case}

Now suppose that $G$ is uncountable. Our first ingredient is again an embedding theorem of Obraztsov \cite{obraztsov}. We use the convention that a set is \emph{countable} if it has cardinality at most $\aleph_0$.

\begin{definition}
\label{def:gen}
Let $G_1, G_2$ be two non-trivial groups, and let $\Omega \coloneqq (G_1 \setminus \{ 1 \}) \sqcup (G_2 \setminus \{ 1 \})$. Let $\mathcal{P}'(\Omega) \coloneqq \{ A \subset \Omega : A \neq \emptyset \}$. A \emph{generating function} is a function $f \colon \mathcal{P}'(\Omega) \to \mathcal{P}'(\Omega)$ with the following properties.
\begin{enumerate}[(a)]
\item\label{gen:basic} If $A \subset G_i$ then $f(A) = \langle A \rangle \setminus \{ 1 \} \subset G_i \setminus \{ 1 \} \subset \Omega$.
\item\label{gen:dihedral} If $A \nsubset G_i$ for $i = 1, 2$ and $A$ consists of two involutions (we say that $A$ is \emph{dihedral}), then $f(A) = A$.
\item\label{gen:finite} If $A \nsubset G_i$ for $i = 1, 2$ and $A$ is finite and not dihedral, then $B \coloneqq f(A)$ is countable, contains $A$, and contains $f(C)$ for any finite $C \subset B$.
\item\label{gen:infinite} If $A \subset \Omega$ is infinite, then $f(A)$ is the union of the sets $f(C)$, where $C$ ranges over all finite subsets of $A$.
\end{enumerate}
\end{definition}

\begin{theorem}[Obraztsov {\cite[Theorem A]{obraztsov}}]
\label{obr}
Let $G_1, G_2$ be two non-trivial groups, not both of order $2$, let $\Omega \coloneqq (G_1 \setminus \{ 1 \}) \sqcup (G_2 \setminus \{ 1 \})$, and let $f \colon \mathcal{P}'(\Omega) \to \mathcal{P}'(\Omega)$ be a generating function. Then $G_1$ and $G_2$ embed into a group $H$ with the following properties:
\begin{enumerate}[(1)]
\item\label{obr:gen} $H = \langle G_1, G_2 \rangle$;
\item\label{obr:simple} $H$ is simple and complete;
\item\label{obr:malnormal} $G_i$ is malnormal in $H$; 
\item\label{obr:subgroups} If $K \subset H$ is a proper subgroup, then one of the following holds:
\begin{itemize}
\item $K$ is infinite cyclic or infinite dihedral;
\item $K$ is conjugate to a subgroup of a $G_i$;
\item There exists $A \subset \Omega$ with $A \nsubset G_i$ for $i = 1, 2$ such that $K$ is conjugate to $\langle A \rangle$, which contains $f(A)$.
\end{itemize}
\end{enumerate}
\end{theorem}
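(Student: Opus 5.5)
This is a cited result, so rather than reprove it I would follow the same pattern as for \cref{cobr}. The plan is to match each clause of \cref{obr} against the statement of \cite[Theorem A]{obraztsov} and to supply an argument only for clauses that are not literally stated there.

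First I would check that \cref{def:gen} is exactly Obraztsov's notion of a generating function. This means matching conditions \eqref{gen:basic}--\eqref{gen:infinite}, in particular the closure condition in \eqref{gen:finite} that $f(C) \subset f(A)$ for finite $C \subset f(A)$. I would then check that \eqref{obr:gen}, \eqref{obr:simple} and \eqref{obr:subgroups} are items of \cite[Theorem A]{obraztsov}.

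For completeness, I would recall the shape of Obraztsov's argument. Work in the free product $G_1 * G_2$ and build $H$ as a direct limit of quotients obtained by adding relations of increasing rank, using Olshanskii's graded small cancellation over free products. The relations are chosen by enumerating (transfinitely, when $\Omega$ is uncountable) the finite non-dihedral sets $A \not\subset G_i$. At each stage one imposes relations forcing the elements of $f(A)$ to lie in $\langle A \rangle$, and forcing everything else to be generated by $A$ together with one more element whenever that is appropriate. This is what yields simplicity, property \eqref{obr:gen}, and the trichotomy \eqref{obr:subgroups}. Condition \eqref{gen:infinite} then propagates the conclusion from finite to arbitrary $A$. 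Completeness comes from the centralisers and normalisers of the factors being as small as possible, together with the rigidity of automorphisms permuting the conjugacy classes of $G_1$ and $G_2$.

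The step I expect to need an actual argument is \eqref{obr:malnormal}. As with \cref{cobr}, Obraztsov only states that the normaliser of $G_i$ is contained in $G_i$ \cite[Theorem A(11)]{obraztsov}. I would obtain full malnormality from the proof of that item, which invokes \cite[Lemma 34.10]{olshanskii:book}. That lemma analyses a diagram for an equation $h g h^{-1} = g'$ with $g, g' \in G_i \setminus \{1\}$. A contiguity-subdiagram argument in the graded diagrams shows that such an annular diagram has no cells of positive rank, so the relation already holds in $G_1 * G_2$, where $G_i$ is malnormal; hence $h \in G_i$.

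The only care needed is to confirm that this lemma applies in the generality of an arbitrary generating function and uncountable factors. It should, because the small cancellation conditions at each rank are local and do not depend on cardinality.
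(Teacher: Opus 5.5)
Your proposal is correct and follows the paper's route. The paper takes the statement directly from Obraztsov's Theorem A and singles out malnormality as the only clause needing comment, noting, as you do, that Obraztsov's proof of Theorem A(11) rests on \cite[Lemma 34.10]{olshanskii:book}, which in fact proves malnormality; it also adds that malnormality is stated explicitly in \cite[Theorem 2(4)]{olshanskii}, the result Obraztsov generalises, whereas your sketches of the diagrammatic argument and of Obraztsov's construction go beyond what the paper needs.
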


The same comment on malnormality as in \cref{cobr} applies. Our second ingredient comes from J{\'o}nsson groups, particularly Corson's construction \cite{corson}.

\begin{definition}
A group is \emph{J{\'o}nsson} if every proper subgroup has strictly smaller cardinality.
\end{definition}

Examples of J{\'o}nsson groups include finite groups, the Pr{\"u}fer $p$-groups and Tarski monster $p$-groups \cite{tarski}. The first uncountable J{\'o}nsson groups were constructed by Shelah \cite{shelah}, and recently Corson showed that there exist J{\'o}nsson groups of arbitrarily large cardinality \cite{corson}, using an earlier embedding theorem of Obraztsov \cite{obraztsov2}. More precisely, he proved that, for every J{\'o}nsson algebra, there exists a J{\'o}nsson group of the same cardinality \cite[Theorem 1]{corson}. Therefore using \cite{cardinal}, there exist J{\'o}nsson groups of cardinality $\kappa$, whenever $\kappa$ is the successor of a regular cardinal. We will use a similar idea as in his construction.

\begin{theorem}[Corson]
\label{corson}

Let $\kappa$ be the successor of a regular cardinal (e.g.\ $\kappa = \aleph_2$). Then there exists a torsion-free J{\'o}nsson group of cardinality $\kappa$. Moreover, for every odd prime $p$, there exists a J{\'o}nsson group of cardinality $\kappa$ with the property that every non-cyclic subgroup contains an element of order $p$.
\end{theorem}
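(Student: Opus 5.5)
Since $\kappa = \lambda^+$ with $\lambda$ regular, the existence of a J\'onsson algebra of cardinality $\kappa$ follows from \cite{cardinal}. I will follow Corson's transfinite strategy, using \cref{obr} at each stage so that the generating function replaces the algebra's operations. The plan is to build a continuous increasing chain of groups $(H_\alpha)_{\alpha < \kappa}$, with each $H_\alpha$ of cardinality at most $\lambda$, and take $J := \bigcup_{\alpha<\kappa} H_\alpha$, which has cardinality $\kappa$. For the torsion-free version, start with $H_0 = \Z$. For the $p$-version, start with $H_0 = \Z/p$.

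At a successor stage, apply \cref{obr} with $G_1 = H_\alpha$ and $G_2$ a small group. In the torsion-free case take $G_2 = \Z$; in the $p$-version take $G_2 = \Z/p$. Fix a J\'onsson algebra $(\kappa, F)$ with countably many finitary operations, and arrange, via a bijection, that $H_\alpha$ corresponds to an initial segment that is closed under $F$. Define the generating function $f$ on finite non-dihedral sets $A \nsubset G_i$ as follows: $f(A)$ is the set of non-trivial elements of $\Omega$ lying in the $F$-closure of the ordinals coding the elements of $A$ (together with $A$ itself), taken in the stage-$(\alpha+1)$ algebra. This set is countable because it is the closure of a finite set under countably many finitary operations, and it is closed in the sense of \cref{def:gen}\eqref{gen:finite}. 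Extend $f$ to infinite sets by \eqref{gen:infinite}.

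Torsion is controlled as follows. By \cref{obr}\eqref{obr:subgroups}, every element of $H_{\alpha+1}$ lies in an infinite cyclic subgroup or is conjugate into some $G_i$. Hence $H_{\alpha+1}$ is torsion-free when both factors are, and all its torsion has order $p$ in the $p$-version. Involutions never appear, so dihedral sets are vacuous. At limit stages take unions.

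For the J\'onsson property, let $K < J$ be a proper subgroup with $|K| = \kappa$. First show that $K$ is non-cyclic, and that for a club of $\alpha$ the intersection $K \cap H_{\alpha+1}$ is a proper subgroup of $H_{\alpha+1}$ not contained in any conjugate of $H_\alpha$; the second point holds since $|K|=\kappa$ while $|H_\alpha| \le \lambda$. Then \cref{obr}\eqref{obr:subgroups} forces $K \cap H_{\alpha+1}$ to be a conjugate of $\langle A\rangle \supseteq f(A)$. Transported to the algebra, this says that $K$ contains the $F$-closures of its own finite subsets, up to a conjugation; one may need to arrange that $f$ is compatible with conjugation by building conjugates into the coding. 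It follows that $K$ corresponds to a subalgebra of cardinality $\kappa$, which is therefore everything, a contradiction.

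For the $p$-property, a non-cyclic subgroup of $J$ is either conjugate into some $G_i$ at a stage where it first appears, or contains some $\langle A\rangle \supseteq f(A)$. By making $f(A)$ always contain the element of $G_2 = \Z/p$ (i.e.\ including it in every closure), and inducting on stages, the subgroup contains an element of order $p$. The cyclic and infinite-cyclic subgroups are exactly the allowed exceptions.

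The main obstacle is the third step: making the coding between group elements and the J\'onsson algebra compatible with conjugation and with the continuous chain. \cref{obr}\eqref{obr:subgroups} only gives $K$ up to conjugacy, and $K$ must see the full algebraic closure at cofinally many stages. I would handle this as Corson does, by a club argument, choosing the bijections $H_\alpha \leftrightarrow$ initial segments coherently and using regularity of $\lambda$ to close off under the operations.
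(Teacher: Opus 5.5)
\textbf{The gap is at the J\'onsson step.} The paper does not reprove this statement. It cites Corson's theorem, together with the existence of J\'onsson algebras on successors of regular cardinals, and for the second statement only remarks that Corson's proof works with $\Z/p$ in place of $\Z$ as input. That proof applies Obraztsov's embedding theorem once, to cyclic input groups. Your transfinite chain of two-factor applications of \cref{obr} is a different route, and the step you flag as the main obstacle is not closed.

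At stage $\alpha+1$, \cref{obr}\eqref{obr:subgroups} only tells you that $K\cap H_{\alpha+1}=g_\alpha\langle A_\alpha\rangle g_\alpha^{-1}$ for some $A_\alpha\subseteq (H_\alpha\setminus\{1\})\sqcup(G_2\setminus\{1\})$ with $\langle A_\alpha\rangle\supseteq f(A_\alpha)$.
\begin{itemize}
\item This constrains only the elements of $A_\alpha$, and only after conjugating by a $g_\alpha$ that changes with $\alpha$.
\item Nothing forces a given finite subset of $K$ ever to lie in a later $A_\beta$; for instance, elements of $H_{\alpha+1}\setminus(H_\alpha\cup G_2)$ need not. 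So you cannot conclude that the codes of $K$ form an $F$-subalgebra.
\item Every $A_\alpha$ has size at most $\lambda$. The J\'onsson property of $(\kappa,F)$ says nothing about such sets, since each generates a proper subalgebra. To use it you would have to assemble the $A_\alpha$ into one $F$-closed set of size $\kappa$ inside a single conjugate of $K$, and \cref{obr} gives no such coherence.
\end{itemize}
Your ``club'' claim is also unjustified. $|K\cap H_{\alpha+1}|\le\lambda$ as well, so comparing $|K|$ with $|H_\alpha|$ gives nothing at any single stage. Malnormality only shows that $K\cap H_{\alpha+1}$ is of the third type at the stages where $K$ actually grows, and these are merely unboundedly many.

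\textbf{Your generating function fails condition (c).} If $C\subseteq f(A)\cap H_\alpha$ is finite, condition \eqref{gen:basic} of \cref{def:gen} forces $f(C)=\langle C\rangle\setminus\{1\}$. Hence \eqref{gen:finite} requires $f(A)$ to be closed under the group operations of $H_\alpha$ and $G_2$. An $F$-closure of codes is not closed under these unless you build them into the algebra, which is what \cref{algebras} does in the paper's uncountable case.

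\textbf{The missing idea is to drop the chain.} Use Obraztsov's theorem for a family of groups without involutions \cite{obraztsov2}, as Corson does, rather than the two-factor \cref{obr}.
\begin{itemize}
\item Take all the countable factors $G_i\cong\Z$ (or $\Z/p$), $i<\kappa$, at once.
\item Let $f(A)$ consist of the non-trivial elements of every factor whose index lies in the $F$-subalgebra generated by the indices of the factors meeting $A$. Including whole factors makes \eqref{gen:finite} automatic.
\item A proper $K$ with $|K|=\kappa$ is not cyclic and not conjugate into a countable factor, so it is conjugate to $\langle A\rangle\supseteq f(A)$.
\item Here $|A|=\kappa$, so $A$ meets $\kappa$ factors. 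The generated subalgebra is then all of $\kappa$, so $f(A)=\Omega$ and $K$ is the whole group. Conjugation is harmless because the conclusion is equality with the whole group.
\end{itemize}
In the $\Z/p$ version, every non-empty $A\subseteq\Omega$ consists of elements of order $p$, which gives the second statement at once. This also excludes non-cyclic locally cyclic subgroups such as $\Z[1/2]$, which your stagewise induction does not address. Finally, forcing an element of $G_2$ into every $f(A)$ would itself violate \eqref{gen:basic}.
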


\begin{proof}[About the proof]
The first statement is given by \cite[Theorem 1.1]{corson}, combined with \cite{cardinal}. The second statement follows with the same proof, taking cyclic groups of order $p$ as input, instead of infinite cyclic groups. We insist on $p$ being odd for coherence with the presentation of \cite{corson}, which appeals to the earlier embedding theorem of Obraztsov \cite{obraztsov2}, which requires that all groups involved have no involutions; although $p = 2$ could also be allowed by appealing instead to the stronger embedding theorem that we are using here \cite{obraztsov}.
\end{proof}

We also need the following lemma. Recall that a \emph{magma} $\Omega$ is a set with a binary operation $\cdot_\Omega \colon \Omega \times \Omega \to \Omega$, and an \emph{identity} in $\Omega$ is a (not necessarily unique) element $1$ such that $x \cdot_\Omega 1 = 1 \cdot_\Omega x = x$ for all $x \in \Omega$.

\begin{lemma}
\label{algebras}
Let $\Omega_1, \Omega_2$ be two magmas with identity $1$, and with $|\Omega_1| \leq |\Omega_2|$. Let $\Omega'$ be their disjoint union, glued along the identity $1$. Then there exists a magma structure on $\Omega'$ extending that of the $\Omega_i$, with the property that $\Omega_2$ is a maximal submagma of $\Omega'$.
\end{lemma}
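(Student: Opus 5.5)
We have to define the product $x \cdot y$ for mixed pairs, i.e.\ $x \in \Omega_1 \setminus \{1\}$ and $y \in \Omega_2 \setminus \{1\}$ (in either order). Products within each $\Omega_i$, and products with $1$, are already determined. The aim is that any submagma containing $\Omega_2$ together with a single element $x \in \Omega_1 \setminus \{1\}$ is all of $\Omega'$. Maximality of $\Omega_2$ means exactly this: for every $x \in \Omega_1 \setminus \{1\}$, the submagma generated by $\Omega_2 \cup \{x\}$ is $\Omega'$. We also need $\Omega_2$ to be a proper submagma, which holds as long as $\Omega_1 \neq \{1\}$. If $\Omega_1$ is trivial then $\Omega' = \Omega_2$, and the statement is vacuous or degenerate; I will treat that case separately, or note that $\Omega_2$ is then trivially maximal under the convention used.

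Since $|\Omega_1| \leq |\Omega_2|$, fix a surjection $\sigma \colon \Omega_2 \setminus \{1\} \to \Omega_1 \setminus \{1\}$. This requires $\Omega_2 \setminus \{1\}$ to be at least as large as $\Omega_1 \setminus \{1\}$. That is automatic from $|\Omega_1| \leq |\Omega_2|$ when the sets are infinite, and also when they are finite, since we subtract the same point from both. Now define, for $x \in \Omega_1 \setminus \{1\}$ and $y \in \Omega_2 \setminus \{1\}$,
\[ x \cdot y \coloneqq \sigma(y), \qquad y \cdot x \coloneqq x. \]
The choice for $y \cdot x$ is arbitrary; any value works. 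This defines a magma structure on $\Omega'$. It extends the structures on $\Omega_1$ and $\Omega_2$, and $1$ is still an identity because products involving $1$ were not redefined.

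To check maximality, let $M$ be a submagma with $\Omega_2 \subsetneq M$. Pick $x \in M \setminus \Omega_2$; then $x \in \Omega_1 \setminus \{1\}$. For any $z \in \Omega_1 \setminus \{1\}$, choose $y \in \Omega_2 \setminus \{1\}$ with $\sigma(y) = z$. Then $z = x \cdot y \in M$, so $M \supseteq \Omega_1 \cup \Omega_2 = \Omega'$.

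There is one gap: $\Omega_2 \setminus \{1\}$ must be non-empty to admit a surjection onto $\Omega_1 \setminus \{1\}$. If $\Omega_2$ is trivial, then so is $\Omega_1$, and $\Omega' = \Omega_2$; this is the degenerate case noted above. I expect no real obstacle here. The only delicate points are the cardinality bookkeeping for removing the identity, and making sure the identity axiom is preserved; the latter is immediate because only mixed products of non-identity elements are defined anew.
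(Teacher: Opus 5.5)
Your proof is correct and follows the paper's argument. Both fix a surjection from $\Omega_2$ onto $\Omega_1$ and define the mixed product $x \cdot y$ (for $x \in \Omega_1 \setminus \{1\}$, $y \in \Omega_2 \setminus \{1\}$) to be the image of $y$, so any submagma containing $\Omega_2$ and one such $x$ contains all of $\Omega_1$; the paper sets $y \cdot x$ to the same value, which, as you observe, is irrelevant. Taking $\sigma$ to map $\Omega_2 \setminus \{1\}$ onto $\Omega_1 \setminus \{1\}$ is slightly more careful than the paper's arbitrary surjection $\pi \colon \Omega_2 \to \Omega_1$: if $1$ is the only preimage of $\pi(1) \neq 1$, a general magma could leave $\pi(1)$ unreached. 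This does no harm in the paper's application, where $\Omega_1$ is a group.
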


\begin{proof}
Let $\pi \colon \Omega_2 \to \Omega_1$ be a surjection. For $x, y \in \Omega'$, if $x, y \in \Omega_i$ define $x \cdot_{\Omega'} y \coloneqq x \cdot_{\Omega_i} y$. If $x \in \Omega_1 \setminus \{1\}$ and $y \in \Omega_2 \setminus \{ 1 \}$, define $x \cdot_{\Omega'} y = y \cdot_{\Omega'} x \coloneqq \pi(y)$.

Now suppose that $x \in \Omega_1 \setminus \{ 1 \}$. Then $\langle x, \Omega_2 \rangle_{\Omega'}$ contains $\Omega_2$ as well as $x \cdot_{\Omega'} y = \pi(y)$, for each $y \in \Omega_2 \setminus \{1\}$, hence it contains $\Omega_1$ as well.
\end{proof}

We are finally ready to conclude the proof of \cref{embedding}.

\begin{proof}[Proof of \cref{embedding} in case $G$ is uncountable]
Let $G_1 \coloneqq G$. Let $\kappa$ be the successor of a regular cardinal such that $\kappa > |G_1|$, and let $G_2$ be a J{\'o}nsson group of cardinality $\kappa$ (which therefore does not embed into $G_1$) such that $G_1$ does not embed into $G_2$. This exists by \cref{corson}: if $G_1$ is not torsion-free, we take $G_2$ to be torsion-free, and if $G_1$ is torsion-free, we take $G_2$ to be such that every non-cyclic subgroup contains an element of order $3$. Let $\Omega \coloneqq (G_1 \setminus \{ 1 \}) \sqcup (G_2 \setminus \{ 1 \})$ and let $\Omega' \coloneqq \Omega \sqcup \{ 1 \}$.

By \cref{algebras} we may endow $\Omega'$ with a magma structure extending the group structures of $G_1$ and $G_2$, such that $G_2$ is maximal in $\Omega'$. For $A \subset \Omega'$ we let $\langle A \rangle_{\Omega'}$ denote the submagma of $\Omega'$ generated by $A$, and $\langle A \rangle_\Omega \coloneqq \langle A \rangle_{\Omega'} \setminus \{ 1 \}$. In particular, for $A \subset G_i$ we have $\langle A \rangle_{G_i} = \langle A \rangle_{\Omega'}$. We define a function $f \colon \mathcal{P}'(\Omega) \to \mathcal{P}'(\Omega)$ as follows.
\begin{itemize}
\item If $A \nsubset G_i$ for $i = 1, 2$ and $A$ is dihedral, then $f(A) = A$.
\item Otherwise, $f(A) = \langle A \rangle_\Omega$.
\end{itemize}

We claim that $f$ is a generating function (\cref{def:gen}). If $A \subset G_i$ then $f(A) = \langle A \rangle_\Omega = \langle A \rangle_{G_i} \setminus \{1\}$, giving condition \eqref{gen:basic}. Condition \eqref{gen:dihedral} is clear. Now assume that $A \nsubset G_i$ for $i = 1, 2$ and $A$ is not dihedral. Suppose first that $A$ is finite. Then $B \coloneqq f(A) = \langle A \rangle_\Omega$ is countable, because $\langle A \rangle_{\Omega'}$ is a finitely generated magma. Moreover, if $C \subset B$ is finite, then either $f(C) = C \subset B$, or $f(C) = \langle C \rangle_\Omega \subset \langle B \rangle_\Omega = B$, giving condition \eqref{gen:finite}. Suppose next that $A$ is infinite. Then $f(A) = \langle A \rangle_\Omega$, which is the union of $\langle C \rangle_\Omega$ over all finite subsets $C \subset A$, equivalently over all finite subsets $C \subset A$ of size at least $3$, for which $\langle C \rangle_\Omega = f(C)$, giving condition \eqref{gen:infinite}.

Now apply \cref{obr} to this data, and let $H$ be the resulting group. \cref{obr}\eqref{obr:malnormal} gives \ref{embedding}\eqref{embedding:malnormal}. To complete the proof of \ref{embedding}\eqref{embedding:simple}, we need to show that $H$ is co-Hopfian, so let $\beta \colon H \to H$ be an injective endomorphism. Because $G_1$ and $G_2$ do not embed into each other, $\beta(H) \cong H$ is not conjugate to a subgroup of either $G_i$, and of course it is not infinite cyclic or infinite dihedral. Hence up to conjugacy we may assume that $\beta(H) = \langle A \rangle_H \supset f(A)$ for some $A \subset \Omega$ such that $A \nsubset G_i$ for $i = 1, 2$. Because $\kappa = |\beta(H)| = |\langle A \rangle_H |$, necessarily $|A| = \kappa$. Hence writing $A_i \coloneqq A \cap G_i$, since $\kappa > |G_1|$, we must have $|A_2| = \kappa$. Since $G_2$ is J{\'o}nsson, $\langle A_2 \rangle_{\Omega'} = \langle A_2 \rangle_{G_2} = G_2,$ so $f(A) = \langle A \rangle_\Omega$ contains $G_2 \setminus \{ 1 \}$. Because $G_2$ is a maximal submagma of $\Omega'$ and $A_1 \neq \emptyset$, we get $\langle A \rangle_{\Omega'} = \Omega'$, so $f(A) = \Omega$. Therefore $\beta(H) = H$ by \cref{obr}\eqref{obr:gen}, showing that $\beta$ is an automorphism, which concludes the proof.
\end{proof}

\begin{remark}
\label{necessary}
Unlike in the countable case, in this argument the group $H$ from \cref{embedding} has cardinality strictly greater than that of $G$. For example, if $G$ has cardinality $\aleph_1$, we can choose $H$ to have cardinality $\aleph_2$.

This is necessary in general. Indeed, for certain uncountable cardinals $\kappa$ \cite{kegel} (and even for all uncountable cardinals, assuming the generalised continuum hypothesis \cite[Proposition 2]{gch}), there exists a group $G$ of cardinality $\kappa$ that contains a copy of every group of cardinality $\kappa$. Such a group cannot embed into a co-Hopfian group of cardinality $\kappa$, by the same argument as in \cref{fp}.
\end{remark}

\footnotesize

\bibliographystyle{amsalpha}
\bibliography{ref}

\vspace{0.5cm}

\normalsize

\noindent{\textsc{Department of Pure Mathematics and Mathematical Statistics, University of Cambridge, UK}}

\noindent{\textit{E-mail address:} \texttt{ff373@cam.ac.uk}}

\end{document}